\documentclass[12pt,a4paper]{amsart}

\usepackage[utf8]{inputenc}
\usepackage[T1]{fontenc}
\usepackage[english]{babel}
\usepackage{cite}

\usepackage{indentfirst}
\usepackage{amssymb}
\usepackage{amsfonts}
\usepackage{amsmath}
\usepackage{amsthm}
\usepackage[top=2.5cm, bottom=2.5cm, left=2.5cm, right=2.5cm]{geometry}
\usepackage{amsopn}
\usepackage[colorlinks]{hyperref}
\usepackage{mathrsfs}
\usepackage{amssymb}
\usepackage{amsxtra}
\usepackage{color}
\usepackage{dsfont}
\usepackage{bbm}

\newcommand{\al}{\alpha}
\newcommand{\be}{\beta}

\newcommand{\R}{\mathbb{R}}

\newcommand{\Ss}{\mathbb{S}}

\newcommand{\Rd}{\mathbb{R}^d}
\newcommand{\eps}{\varepsilon}
\newcommand{\EEE}{\mathcal{E}}
\newcommand{\LL}{\mathcal{L}} 
\newcommand{\Cc}{\mathcal{C}} 
\newcommand{\BB}{\mathrm{B}} 

\DeclareMathOperator{\supp}{supp}
\DeclareMathOperator{\dist}{dist}

\newcommand{\LHS}{\mathrm{LHS}}
\newcommand{\RHS}{\mathrm{RHS}}

\newtheorem{theorem}{Theorem}

\newtheorem{lemma}{Lemma}

\theoremstyle{definition}

\newtheorem{remark}{Remark}

\title[Hardy inequalities]{Sharp Hardy inequalities for Sobolev-Bregman forms}
\date{\today}

\author[M.~Kijaczko]{Micha{\l} Kijaczko}
\address{Faculty of Pure and Applied Mathematics, Wroc\l{}aw University of Science and Technology, Wyb. Wyspia\'nskiego 27, 50-370 Wroc\l{}aw, Poland.}
\email{michal.kijaczko@pwr.edu.pl}

\author[J.~Lenczewska]{Julia Lenczewska}
\thanks{The second named author was partially supported by National Science Centre (Poland) grant 2019/33/B/ST1/02494.}
\address{Faculty of Pure and Applied Mathematics, Wroc\l{}aw University of Science and Technology, Wyb. Wyspia\'nskiego 27, 50-370 Wroc\l{}aw, Poland.}
\email{julia.lenczewska@pwr.edu.pl}

\subjclass[2020]{Primary 26D10; Secondary 31C25}
\keywords{Hardy inequality, fractional Laplacian, half-space, convex domain}

\begin{document}
\selectlanguage{english}

\maketitle
\begin{abstract}
We obtain sharp fractional Hardy inequalities for the half-space and for convex domains. We extend the results of Bogdan and Dyda and of Loss and Sloane to the setting of Sobolev-Bregman forms.
\end{abstract}

\section{Introduction and main results}

Let $0 < \al < 2$ and $d = 1, 2, \ldots$. Bogdan and Dyda \cite{MR2663757} proved the following Hardy inequality in the half-space $D = \{x=(x_1, \ldots, x_d) \in \Rd: x_d>0 \}$. 
For every $u\in C_c(D)$,
\begin{equation}\label{HardyBD}
\frac{1}{2}
\int_D \! \int_D
\frac{(u(x)-u(y))^2}{|x-y|^{d+\al}} \,dx\,dy
\geq {\kappa_{d,\al}}
\int_D \frac{u(x)^2}{x_d^{\al}} dx\,,
\end{equation}
where
\begin{equation}
  \label{eq:ns}
\kappa_{d,\al}=\frac{\pi^{\frac{d-1}{2}}
  \Gamma\big(\frac{1+\al}{2}\big)}{\Gamma\big(\frac{\al+d}{2}\big)}
\frac{\BB\big(\frac{1+\al}{2}, \frac{2-\al}{2}\big)-2^{\al}}{\al2^{\al}}\,,
\end{equation}
and {\rm \eqref{HardyBD}} fails to hold for some $u\in C_c(D)$ if $\kappa_{d,\al}$ is replaced by a
bigger constant. Here,
$\Gamma$ is the Euler gamma function, 
$B$ is the Euler beta function, and 
$C_c(D)$ denotes the class of all the continuous functions
$u\,:\; \Rd\to \R$ with compact support in $D$.

The main purpose of this note is to prove a generalization of
this inequality, 
where the left-hand side of \eqref{HardyBD} is replaced with the following form:
\begin{equation}\label{e:dEp}
\EEE_p[u] :=
\frac{1}{2}
\int_{D} \int_{D} (u(x)-u(y)) (u(x)^{\langle p - 1 \rangle}  -u(y)^{\langle p - 1 \rangle} ) |x-y|^{-d-\al} \, dy \, dx,
\end{equation}
defined for $p \in (1,\infty)$ and $u : \Rd \to \R$, where
$$
a^{\langle k \rangle} := \left|a \right|^k \operatorname{sgn} a,\quad a, k \in \R.
$$
We call such integral forms the \emph{Sobolev-Bregman forms}.

For $\al\neq 1$ let 
\begin{equation}
  \label{eq:ns1}
\kappa_{d,p, \al}= - \frac{\pi^{\frac{d-1}{2}}
  \Gamma\big(\frac{1+\al}{2}\big)}{\Gamma\big(\frac{\al+d}{2}\big)} 
	\left(\BB\left(\tfrac{\al-1}{p} +1, -\al\right) + \BB\left(\al-\tfrac{\al-1}{p}, -\al\right) + \tfrac{1}{\al}\right) \geq 0.
\end{equation}
Recall that $\BB(x,y)=\Gamma(x)\Gamma(y)/\Gamma(x+y)$ and $1/\Gamma$ can be extended analytically to the whole of $\R$, hence $\BB(x,y)$ is well defined for all $x,y\neq 0,-1,-2,\dots$. Noteworthy, $\kappa_{d,p,1}=0$ (understood as the limit of $\kappa_{d,p,\al}$ as $\al \to 1$). Furthermore, observe that  $\kappa_{d,p,\alpha} = \kappa_{d,p',\alpha}$, where $p'=p/(p-1)$.

Our first main result reads as follows.

\begin{theorem}\label{thm1}
Let $0<\al<2$, $d=1,2,\dots$ and $1<p<\infty$. For every $u\in C_c(D)$,
\begin{equation}\label{Hardyin}
\frac{1}{2} \int_D \int_D \frac{(u(x)-u(y))(u(x)^{\langle p-1 \rangle} -u(y)^{\langle p-1 \rangle})}{|x-y|^{d+\al}} \,dx\,dy
\geq
 {\kappa_{d,p,\al}}
\int_D \frac{|u(x)|^p}{x_d^{\al}} dx\,,
\end{equation}
and the constant in \eqref{Hardyin} is the best possible, i.e. it cannot be replaced by a bigger one.
\end{theorem}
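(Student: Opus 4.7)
\smallskip
\noindent\textbf{Proof plan.}
The plan is to adapt the ground-state substitution of Bogdan--Dyda \cite{MR2663757} to the Sobolev--Bregman form $\EEE_p$. Motivated by the $p=2$ case (where they use $x_d^{(\al-1)/2}$) and by the symmetry $\kappa_{d,p,\al}=\kappa_{d,p',\al}$, I would take as candidate ground state
\[
w(x) := x_d^{(\al-1)/p}, \qquad x \in D,
\]
so that $w(x)^{\langle p-1\rangle} = x_d^{\gamma}$ with $\gamma := (\al-1)(p-1)/p$. Under $p\leftrightarrow p'$, the two exponents $(\al-1)/p$ and $\gamma$ swap (they sum to $\al-1$), which reflects the invariance of $\kappa_{d,p,\al}$ noted after \eqref{eq:ns1}.

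The first computational task is the principal-value integral
\[
Lw(x) := \mathrm{PV}\int_D \frac{w(x)^{\langle p-1\rangle} - w(y)^{\langle p-1\rangle}}{|x-y|^{d+\al}}\,dy.
\]
Integrating out the transverse variables via the standard identity
\[
\int_{\R^{d-1}} \frac{dy'}{\bigl(|x'-y'|^2 + (x_d-y_d)^2\bigr)^{(d+\al)/2}} = \frac{\pi^{(d-1)/2}\,\Gamma(\tfrac{1+\al}{2})}{\Gamma(\tfrac{d+\al}{2})}\,|x_d-y_d|^{-1-\al},
\]
and substituting $y_d = t x_d$, reduces the task to evaluating
$\mathrm{PV}\int_0^\infty (1-t^\gamma)|1-t|^{-1-\al}\,dt$, multiplied by the prefactor $x_d^{\gamma-\al}$. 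Splitting at $t=1$, applying $t\mapsto 1/t$ on $(1,\infty)$, and regularising symmetrically about $t=1$ converts this into the bracket in \eqref{eq:ns1}: the two beta terms arise from analytic continuation of $\BB(\gamma+1,-\al)$ and $\BB(\al-\gamma,-\al)$ via the identity $-(\gamma/\al)\BB(\gamma,1-\al)=\BB(\gamma+1,-\al)$, while the extra $1/\al$ emerges from the cancellation of the boundary contributions at $t=1\pm\eps$. The outcome is $Lw(x) = -\,\kappa_{d,p,\al}\,w(x)^{p-1}\,x_d^{-\al}$.

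Next I would invoke a Sobolev--Bregman ground-state representation: a pointwise inequality of the form $(a-b)\bigl(a^{\langle p-1\rangle}-b^{\langle p-1\rangle}\bigr)\ge F_p(a,b;A,B)$, symmetric in $(a,A)\leftrightarrow(b,B)$, whose integration against $|x-y|^{-d-\al}$ with $A=w(x)$, $B=w(y)$ gives
\[
\EEE_p[u] \ge \int_D |u(x)|^p\,\frac{-Lw(x)}{w(x)^{p-1}}\,dx = \kappa_{d,p,\al}\int_D \frac{|u(x)|^p}{x_d^\al}\,dx.
\]
Such pointwise estimates underpin ground-state representations for nonlinear fractional forms in works of Frank--Seiringer and Bogdan--Jakubowski; combined with the previous step they yield \eqref{Hardyin}.

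Finally, for sharpness, I would construct $u_n := \eta_n w$, where $\eta_n$ is a smooth cutoff supported in $\{1/n<x_d<n,\ |x'|<n\}$ and equal to $1$ on a slightly smaller box. Both $\EEE_p[u_n]$ and $\int_D|u_n|^p\,x_d^{-\al}\,dx$ diverge logarithmically in $n$, while the cutoff errors are shown to be subleading by direct kernel estimates, so their ratio converges to $\kappa_{d,p,\al}$. I expect the principal obstacle to be the pointwise Bregman inequality: for $p=2$ it reduces to the elementary identity $(a-b)^2=(a^2-b^2)-2b(a-b)$ after symmetrisation, but for $p\ne 2$ the asymmetry between $(a-b)$ and $(a^{\langle p-1\rangle}-b^{\langle p-1\rangle})$ forces a genuine convexity/Bregman estimate that is delicate to arrange so that no slack is lost in the constant. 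A secondary, purely computational obstacle is the symmetric regularisation of the principal-value integral, which alone produces the $1/\al$ term in \eqref{eq:ns1} and has no counterpart in the simpler $p=2$ evaluation.
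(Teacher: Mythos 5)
Your plan follows essentially the same route as the paper: the ground state $w=x_d^{(\al-1)/p}$, the ground-state representation with a nonnegative Bregman remainder, the reduction of the fractional Laplacian of a power of $x_d$ to a one-dimensional Beta-type integral (this is precisely the formula \eqref{ullapl} that the paper imports from Bogdan--Burdzy--Chen), and the sharpness argument via cutoff functions $u_n=\eta_n w$ with logarithmic blow-up of the weighted norm and subleading cutoff error. All of this matches the paper's Lemma \ref{ep} and the subsequent optimality argument, which in turn adapts the $p=2$ case from Bogdan--Dyda.

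A few points where your plan is loose and where the paper's proof supplies what you left as an obstacle. First, the ``Sobolev--Bregman ground-state representation'' you anticipate as the principal obstacle is exactly the paper's algebraic identity \eqref{eq:dec}: it rewrites $p\,u(x)^{\langle p-1\rangle}(u(x)-u(y)) + |u(y)|^p\tfrac{w(x)^{p-1}-w(y)^{p-1}}{w(y)^{p-1}} + (p-1)|u(x)|^p\tfrac{w(y)-w(x)}{w(x)}$ as $F_p\bigl(\tfrac{u(y)}{w(y)},\tfrac{u(x)}{w(x)}\bigr)w(x)^{p-1}w(y)$, which is nonnegative. It really is the crux, and there is no loss of sharpness because it is an identity, not an estimate. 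Second, your plan writes the lower bound in terms of a single eigenvalue equation $Lw=\kappa\,w^{p-1}x_d^{-\al}$ with $Lw$ built from $w^{p-1}$ only; the actual representation produces the weighted combination $\tfrac{1}{p}\tfrac{-\LL(w^{p-1})}{w^{p-1}}+\tfrac{p-1}{p}\tfrac{-\LL w}{w}$, so both $\gamma(\al,\be)$ and $\gamma(\al,(p-1)\be)$ appear. For your choice $\be=(\al-1)/p$ they happen to coincide because $\be+(p-1)\be=\al-1$ and $\gamma(\al,\be)=\gamma(\al,\al-1-\be)$; that coincidence is exactly why your simplified formulation still yields the correct constant, and it is the analytic shadow of the $p\leftrightarrow p'$ symmetry you noted, but the derivation of the ground-state identity genuinely involves both terms. (The paper in fact keeps $\be$ general in Lemma \ref{ep} and only then minimises the resulting integrand pointwise over $\be$, recovering your $\be=(\al-1)/p$ as the optimiser.) Third, there is a sign slip: with your definition of $Lw$ one gets $Lw=+\kappa_{d,p,\al}w^{p-1}x_d^{-\al}$, not $-\kappa_{d,p,\al}w^{p-1}x_d^{-\al}$, and correspondingly the lower bound should read $\EEE_p[u]\ge\int_D|u|^p\,Lw/w^{p-1}\,dx$ without the extra minus; as written your displayed inequality is trivially true and does not give \eqref{Hardyin}. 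Finally, for the sharpness step the paper uses the estimate $c_p\bigl(b^{\langle p/2\rangle}-a^{\langle p/2\rangle}\bigr)^2\le F_p(a,b)\le C_p\bigl(b^{\langle p/2\rangle}-a^{\langle p/2\rangle}\bigr)^2$, which is what makes the choice $\eta_n=v_n^{2/p}$ convenient and lets the $p=2$ tail estimates of Bogdan--Dyda carry over; your ``direct kernel estimates'' would need to supply something of this kind to control the asymmetric weight $w(x)^{p-1}w(y)$ in the remainder.
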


Our work is motivated by a recent paper of Bogdan, Jakubowski, Lenczewska and Pietruska-Pa{\l}uba \cite{bogdan2021optimal},  
who obtained a similar inequality for the whole space $\R^d$ instead of $D$ (see \cite[Theorems 1 and 2]{bogdan2021optimal}), namely they proved that if $0<\alpha<d \wedge 2$, then for $u\in L^p(\mathbb R^d)$,
$$
\frac{\mathcal{A}_{d,-\al} }{2} \int_{\Rd} \int_{\Rd} \frac{(u(x)-u(y))(u(x)^{\langle p-1 \rangle} -u(y)^{\langle p-1 \rangle})}{|x-y|^{d+\al}} \,dx\,dy
\geq
 {\kappa_{\frac{d-\al}{p}}}
\int_{\Rd} \frac{|u(x)|^p}
{|x|^{\al}} dx \, ,
$$
where $\mathcal{A}_{d,-\al}=\frac{2^{\al}\Gamma((d+\al)/2)}{\pi^{d/2}|\Gamma(-\al/2)|}$ and the constant $\kappa_{\frac{d-\al}{p}}$ is explicit and optimal.
They used this inequality to characterize the $L^{p}$ contractivity property of the Feynman-Kac semigroup generated by $\Delta^{\al/2}+\kappa|x|^{-\al}$.

For the sake of completeness, let us mention that the
sharp fractional Hardy inequality
\begin{equation}\label{frankseringer}
\int_{D}\int_{D}\frac{|u(x)-u(y)|^{p}}{|x-y|^{d+sp}}\,dy\,dx\geq \mathcal{D}_{d,s,p}\int_{D}\frac{|u(x)|^{p}}{x_{d}^{sp}}\,dx,  
\end{equation}
where $u \in C_0^{\infty}(\overline{D})$ if $ps<1$ and $u\in C_0^{\infty} (D)$ if $ps>1$,
was obtained by Frank and Seiringer in \cite{MR2723817}. The constant $\mathcal{D}_{d,s,p}$ is optimal and has an explicit form (see \cite[(1.4)]{MR2723817}). However, this result is not directly comparable to ours, as the integral forms in \eqref{e:dEp} and on the left-hand side of \eqref{frankseringer} are different for $p\neq 2$. The reader interested in fractional Hardy inequalities may also see Frank and Seiringer \cite{MR2469027} for an analogous result on $\R^d$, Frank, Lieb and Seiringer \cite{MR2425175} for other optimal inequalities and \cite{MR2085428, MR3237044, MR3803664}
for more general Hardy inequalities, but with unknown sharp constants.

Loss and Sloane \cite{MR2659764} proved that 
if $\alpha \in (1,2)$, then
a fractional Hardy inequality similar to \eqref{frankseringer} holds for all convex, proper subsets of $\Rd$, with the same optimal constant (see \cite[Theorem 1.2]{MR2659764}). We obtain an analogous formula for Sobolev-Bregman forms and this is our second main result.

\begin{theorem}\label{convex}
Let $\Omega$ be an open, proper subset of $\Rd$ and let $1<\al<2$. Then, for $u\in C_{c}(\Omega)$,
\begin{equation}\label{hardyconvex1}
\frac{1}{2}\int_{\Omega} \int_{\Omega} 
\frac{(u(x)-u(y))(u(x)^{\langle p-1 \rangle} -u(y)^{\langle p-1 \rangle})}{|x-y|^{d+\al}} \,dx\,dy\geq\kappa_{d,p,\al}\int_{\Omega}\frac{|u(x)|^{p}}{m_{\al}(x)^{\al}}\,dx,
\end{equation}
where 
$$
m_{\al}(x)^{\al}=\frac{\int_{\Ss^{d-1}}|\omega_{d}|^{\al}\,d\omega}{\int_{\Ss^{d-1}}d_{\omega,\Omega}(x)^{-\al}\,d\omega}, \quad d_{\omega,\Omega}(x)=\min\{|t|:x+t\omega\notin\Omega\}.
\vspace{0.1cm}
$$
In particular, if $\Omega$ is convex, then
\begin{equation}\label{hardyconvex2}
 \frac{1}{2}\int_{\Omega} \int_{\Omega} \frac{(u(x)-u(y))(u(x)^{\langle p-1 \rangle} -u(y)^{\langle p-1 \rangle})}{|x-y|^{d+\al}} \,dx\,dy
 \geq 
 \kappa_{d,p,\al} 
 \int_{\Omega}\frac{|u(x)|^{p}}{\dist(x,\partial\Omega)^{\al}}\,dx. 
\end{equation}
The constant in \eqref{hardyconvex2} is optimal.
\end{theorem}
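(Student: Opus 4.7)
The plan is to follow the Loss--Sloane strategy: reduce the $d$-dimensional inequality to a family of one-dimensional ones along lines via a polar change of variables, apply Theorem~\ref{thm1} with $d=1$ on each line, and then reassemble. For the first step I would use the identity
$$
\int_{\Rd}\int_{\Rd} H(x,y)\,dx\,dy = \frac{1}{2}\int_{\Ss^{d-1}}\int_{\omega^{\perp}}\int_{\R}\int_{\R} H(z+s\omega,z+t\omega)\,|s-t|^{d-1}\,ds\,dt\,dz\,d\omega
$$
(valid for any measurable $H\geq 0$), applied to the integrand of $\EEE_p[u]$ restricted to $\Omega\times\Omega$. This rewrites the left-hand side of \eqref{hardyconvex1} as a fourfold integral, over $\omega\in\Ss^{d-1}$ and $z\in\omega^{\perp}$, of the one-dimensional Sobolev--Bregman form of $v(s):=u(z+s\omega)$ on the open set $E_{\omega,z}:=\{s\in\R:z+s\omega\in\Omega\}$.

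The core of the argument is then a one-dimensional Hardy inequality: for every open $E\subset\R$ and every $v\in C_{c}(E)$,
$$
\frac{1}{2}\int_E\int_E \frac{(v(s)-v(t))(v(s)^{\langle p-1\rangle}-v(t)^{\langle p-1\rangle})}{|s-t|^{1+\al}}\,ds\,dt \;\geq\; \kappa_{1,p,\al}\int_E \frac{|v(s)|^p}{d_E(s)^{\al}}\,ds,
$$
with $d_E(s):=\dist(s,\R\setminus E)$. Because the Sobolev--Bregman kernel is non-negative, one may drop contributions from distinct connected components of $E$ and reduce to the case of a single interval. On a half-line the inequality is exactly Theorem~\ref{thm1} for $d=1$. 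On a bounded interval $(a,b)$, I would apply Theorem~\ref{thm1} on $(a,\infty)$ and on $(-\infty,b)$ to the zero extension of $v$; the resulting tails $\tfrac{1}{\al}\int_a^b |v|^p/(b-s)^\al\,ds$ and $\tfrac{1}{\al}\int_a^b |v|^p/(s-a)^\al\,ds$ must then be cancelled by a ground-state substitution $v=w f$ with a weight $w$ built from powers of the distances to the two endpoints, producing an explicit non-negative Bregman remainder that restores the sharp coefficient $\kappa_{1,p,\al}$.

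Reinserting the 1D inequality into the polar identity and undoing the change of variables gives
$$
\EEE_p[u] \;\geq\; \frac{\kappa_{1,p,\al}}{2}\int_\Omega |u(x)|^p \int_{\Ss^{d-1}} d_{\omega,\Omega}(x)^{-\al}\,d\omega\,dx.
$$
The spherical identity $\int_{\Ss^{d-1}}|\omega_d|^\al\,d\omega = 2\pi^{(d-1)/2}\Gamma((\al+1)/2)/\Gamma((\al+d)/2)$, combined with the factorization $\kappa_{d,p,\al}=\tfrac12\,\kappa_{1,p,\al}\int_{\Ss^{d-1}}|\omega_d|^\al\,d\omega$ (immediate from \eqref{eq:ns1}) and the definition of $m_\al$, then delivers \eqref{hardyconvex1}. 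For convex $\Omega$, a supporting half-space $H$ at the nearest boundary point of $x$ contains $\Omega$, so $d_{\omega,\Omega}(x)\leq d_{\omega,H}(x)$ for every $\omega$ and hence $m_\al^\Omega(x)\leq m_\al^H(x)=\dist(x,\partial\Omega)$, whence \eqref{hardyconvex2}. Optimality in \eqref{hardyconvex2} is inherited from Theorem~\ref{thm1} by taking $\Omega=D$, where $\dist(x,\partial D)=x_d$ and the inequality reduces to \eqref{Hardyin}.

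The main obstacle is the bounded-interval case of the 1D inequality. A straightforward linear combination of the two one-sided applications of Theorem~\ref{thm1} yields only the weaker coefficient $\kappa_{1,p,\al}-1/\al$ in front of $\int_E |v|^p/d_E^\al$, so the tail terms must be cancelled \emph{exactly}---and not just bounded---by a Bregman identity adapted to the interval. Producing the explicit Bregman-type expansion needed for this cancellation, and verifying that the remainder is genuinely non-negative, is the technically delicate part of the proof.
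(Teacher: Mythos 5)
Your overall architecture matches the paper's: the Loss--Sloane polar decomposition reduces the $d$-dimensional inequality to a one-dimensional Hardy inequality along lines, the reduction to a single interval by dropping cross-component terms is correct (and is used in the paper), the spherical identity and the factorization $\kappa_{d,p,\al}=\tfrac12\,\kappa_{1,p,\al}\int_{\Ss^{d-1}}|\omega_d|^\al\,d\omega$ are exactly what the paper uses, and the deduction of \eqref{hardyconvex2} from \eqref{hardyconvex1} via a supporting half-space is correct.

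The genuine gap is precisely the one you flag yourself: the one-dimensional lemma on a bounded interval. You propose applying Theorem~\ref{thm1} on the two half-lines $(a,\infty)$ and $(-\infty,b)$ to the zero extension of $v$, obtaining tail terms of the form $\tfrac{1}{\al}\int_a^b|v|^p(b-s)^{-\al}\,ds$ and $\tfrac{1}{\al}\int_a^b|v|^p(s-a)^{-\al}\,ds$, and then you want to ``cancel these exactly'' by an unspecified ground-state substitution with a weight built from powers of the distance to \emph{both} endpoints. This is where the proposal breaks down: you have not identified such a weight, and the expected Bregman remainder identity is not produced. In fact the paper does not follow this route at all. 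Instead, the paper's Lemma~\ref{lem3} uses a \emph{one-sided} weight $w(x)=x^{(\al-1)/p}$ directly on $(0,1)$ and observes that the potential
$V(x)=\text{P.V.}\int_0^1\bigl(\tfrac1p\tfrac{w(x)^{p-1}-w(y)^{p-1}}{w(x)^{p-1}}+\tfrac{p-1}{p}\tfrac{w(x)-w(y)}{w(x)}\bigr)|x-y|^{-1-\al}\,dy$
can be written as $\text{P.V.}\int_0^\infty(\cdots)-\int_1^\infty(\cdots)$, where the first term equals $\kappa_{1,p,\al}x^{-\al}$ by \eqref{ullapl} and the second has a nonpositive integrand for $\al>1$ (because $w$ is increasing), so it only \emph{increases} the bound: $V(x)\ge\kappa_{1,p,\al}x^{-\al}$. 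This yields the $(0,1)$ inequality with the one-sided weight $x^{-\al}$. The $\min\{x,1-x\}^{-\al}$ version is then obtained not by a cancellation identity but by splitting $(0,1)$ into $(0,\tfrac12)$ and $(\tfrac12,1)$, rescaling each piece to $(0,1)$, applying the one-sided inequality to $u(\cdot/2)$ and $u(1-\cdot/2)$, and using that the sum of the two double integrals over $(0,\tfrac12)^2$ and $(\tfrac12,1)^2$ is bounded by the full double integral over $(0,1)^2$ (superadditivity via nonnegativity of the Sobolev--Bregman integrand). That trick avoids the tail-cancellation problem entirely, and it is the missing ingredient in your sketch.

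One further, smaller point concerns optimality: taking $\Omega=D$ and invoking Theorem~\ref{thm1} only shows that the constant cannot be improved uniformly over all convex domains. The paper proves more---that the constant is optimal for each individual convex $\Omega$---by locating a tangent supporting hyperplane and adapting the argument of Marcus--Mizel--Pinchover \cite{MR1458330}. If the intended reading of ``optimal'' is the per-domain statement, your argument does not deliver it.
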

 Here, $\dist(x, \partial \Omega)$ denotes the distance from the point $x \in \Omega$ to $\partial \Omega$, i.e. $\dist(x,\partial\Omega)=\inf_{y\in\partial\Omega}|x-y|$. 
 We note that \eqref{hardyconvex2} is an easy consequence of \eqref{hardyconvex1} since $m_{\al}(x) \leq \dist(x, \partial \Omega)$ if $\Omega$ is convex and $x \in \Omega$ (see \cite{MR2659764}). If $\al\leq 1$ and $\Omega$ is a bounded convex domain, then the inequality \eqref{hardyconvex2} cannot hold with any positive constant, see Remark \ref{remark}.

Fractional Hardy inequalities are of interest not only from the analytical point of view, but also due to their connection with stochastic processes by means of
Dirichlet forms.
In particular, our result is related to the censored $\al$-stable process in $D$, which, 
informally speaking, 
is a stable process ,,forced'' to stay inside $D$, see Bogdan, Burdzy and Chen \cite{MR2006232}. If we denote its Dirichlet form by $\mathcal{C}(u,v)$, then similarly as in \cite{MR2663757},
\eqref{Hardyin} is for $u\in C_c^{\infty}(D)$ equivalent to 
\begin{equation*}
\Cc(u,u^{\langle p-1 \rangle})
\geq 
 \mathcal{A}_{d,-\alpha} \kappa_{d,p,\alpha}
\int_D \frac{|u(x)|^p}{x_d^{\al}} dx .
\end{equation*}
Due to the relation between $\mathcal{C}$ and the Dirichlet form of the $\al$-stable process killed upon leaving $D$, which we denote by $\mathcal{K}(u,v)$ (we again refer to \cite{MR2006232}), we get
\begin{equation*}
\mathcal{K}(u,u^{\langle p-1 \rangle})
\geq \mathcal{A}_{d,-\alpha} \left(\kappa_{d,p,\alpha} + \frac{1}{\alpha} \frac{\pi^{(d-1)/2} \Gamma\left((1+\alpha)/2\right)}{\Gamma\left((\alpha+d)/2\right)}\right)
\int_D \frac{|u(x)|^p}{x_d^{\alpha}} dx ,
\end{equation*}
for all $u\in C^\infty_c(D)$.

The generator of the censored $\alpha$-stable process in $D$ is the \emph{regional fractional Laplacian}, defined for $u \in C_c^2(D)$ by the formula
$$
\Delta^{\al/2}_D u(x)= \mathcal{A}_{d,-\al} \lim_{\varepsilon\to 0^+}
\int_{D\cap \{|y-x|>\varepsilon\}}
\frac{u (y)-u(x)}{|x-y|^{d+\al}} \,dy\,, 
$$  
see \cite{MR2238879, MR2214908}.
We will use the notation $\LL = \mathcal{A}_{d,-\al}^{-1} \Delta^{\al/2}_D.$

For $a,b\in\R$ and $p>1$, we define the \emph{Bregman divergence}
$$
F_{p}(a,b):=|b|^{p}-|a|^{p}-p a^{\langle p-1 \rangle}(b-a).
$$
The function $F_p$ is nonnegative as the second-order Taylor remainder of the convex function $x\mapsto |x|^{p}$. Moreover, we have the identity $F_{p}(a,b)+F_{p}(b,a)=p(b-a)(b^{\langle p-1 \rangle}-a^{\langle p-1 \rangle})$, and the latter expression appears on the left-hand side of \eqref{Hardyin}. We refer the reader to \cite{bogdan2021optimal} for more references on Sobolev-Bregman forms.

We denote by $|x|=(x_1^2+\dots+x_d^2)^{1/2}$  the Euclidean norm of 
$x=(x_1,\ldots,x_d)\in \R^d$, and $B(x,r)$ 
stands for the Euclidean ball of radius $r>0$ centered at $x$.
For $d\geq 2$ we write $x=(x',x_d)$, where
$x'=(x_1,\ldots,x_{d-1})$, and we let $\|x'\|=\displaystyle\max_{k=1,\ldots,d-1}
|x_k|$.

\textbf{Acknowledgement.} We thank Bart\l{}omiej Dyda and Tomasz Jakubowski for comments on the original version of the manuscript and inspiring discussions on related Hardy inequalities. We would also like to thank the anonymous referee for helpful remarks.

\section{Proof of Theorem \ref{thm1}}
In order to prove our first 
result, we will need
an analogue of \cite[Theorem 1]{bogdan2021optimal}. 

Let $w= w_{\be}=x_d^{\be}$ for $\be\in(-1,\al)$. By \cite[(5.4) and (5.5)]{MR2006232},
\begin{equation}\label{ullapl}
 \LL w_{\be}(x) = 
\;\gamma(\al,\be)\,   \frac{\pi^\frac{d-1}{2}  \Gamma\big(\tfrac{1+\al}{2}\big)}
  {\Gamma\big(\tfrac{\al+d}{2}\big)}\, x_d^{-\al}\,w_{\be}(x) \,,
\end{equation}
where 
\begin{equation}
  \label{eq:dg}
 \gamma(a,b) = \int_0^1 
  \frac{(t^{b}-1)(1-t^{a-b-1})}{(1-t)^{1+a}}\, dt,  \quad a\in(0,2),\,b\in(-1,a),
\end{equation}
\noindent is absolutely convergent. We note that $\gamma(\al, \be) \leq 0$ if and only if  $\be(\al-\be-1)\geq 0$.

\begin{lemma}\label{ep}
Let $w(x)=w_{\be}(x)=x_{d}^{\be},$
$\be \in (-1, \al)$ for $p \in (1,2)$ and $\be \in \big(-\frac{1}{p-1}, \frac{\al}{p-1}\big)$ for $p \in [2,\infty)$,
and $u\in C_{c}(D)$. Then we have
\begin{align*}
\EEE_{p}[u]&=
\frac{(p-1)\gamma(\al, \be) + \gamma(\al, (p-1)\be)}{p} \frac{\pi^\frac{d-1}{2}  \Gamma\big(\frac{1+\al}{2}\big)}
  {\Gamma\big(\frac{\al+d}{2}\big)}\,  \int_D \frac{|u(x)|^p}{x_d^{\al}} dx 
	\\&\quad 
+\frac{1}{p} \int_{D}\int_{D}F_{p}\left(\frac{u(x)}{w(x)},\frac{u(y)}{w(y)}\right)\,w(x)^{p-1}\,w(y)\frac{dy\,dx}{|x-y|^{d+\al}}.
\end{align*}
In particular, for $\be=\frac{\al-1}{p}$,
\begin{align*}
    \EEE_p[u] &= \kappa_{d,p,\al} \int_D \frac{|u(x)|^p}{x_d^{\al}} dx + \frac{1}{p} \int_{D}\int_{D}F_{p}\left(\frac{u(x)}{w(x)},\frac{u(y)}{w(y)}\right)\,w(x)^{p-1}\,w(y)\frac{dy\,dx}{|x-y|^{d+\al}}.
\end{align*}
\end{lemma}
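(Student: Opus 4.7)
The plan is to set $v:=u/w$ (which belongs to $C_c(D)$ because $u\in C_c(D)$ and $w=x_d^\be$ is positive on $D$), to decompose $\EEE_p[u]$ pointwise into a ``potential'' piece plus a Bregman remainder, and then to evaluate the potential piece using \eqref{ullapl}. Starting from the elementary identity $F_p(a,b)+F_p(b,a)=p(a-b)(a^{\langle p-1\rangle}-b^{\langle p-1\rangle})$ with $a=u(x)$, $b=u(y)$, and substituting $u(\cdot)=v(\cdot)w(\cdot)$ into each $F_p$ on the left, the cross terms $u(x)u(y)^{\langle p-1\rangle}$, $u(y)u(x)^{\langle p-1\rangle}$ cancel and, after regrouping, one arrives (writing $u_x=u(x)$, etc.) at the pointwise identity
\begin{equation*}
p(u_x-u_y)(u_x^{\langle p-1\rangle}-u_y^{\langle p-1\rangle})=F_p(v_x,v_y)w_x^{p-1}w_y+F_p(v_y,v_x)w_y^{p-1}w_x+|v_x|^p G(w_x,w_y)+|v_y|^p G(w_y,w_x),
\end{equation*}
where $G(a,b):=(p-1)a^{p-1}(a-b)+a(a^{p-1}-b^{p-1})$.

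Dividing the identity by $2p$, multiplying by $|x-y|^{-d-\al}$ and integrating over $D\times D$ turns the left-hand side into $\EEE_p[u]$. On the right, the $x\leftrightarrow y$ symmetry of the kernel merges the two Bregman summands into $\tfrac{1}{p}\int_D\!\int_D F_p(v_x,v_y)w_x^{p-1}w_y|x-y|^{-d-\al}\,dy\,dx$, which is exactly the remainder appearing in the lemma, while the two $G$-terms combine into $\tfrac{1}{p}\int_D |v(x)|^p I(x)\,dx$ with $I(x):=\int_D G(w_x,w_y)|x-y|^{-d-\al}\,dy$. Splitting $G$ into its two summands and using $\int_D (w_x-w_y)|x-y|^{-d-\al}\,dy=-\LL w(x)$ (and the analogous formula for $w^{p-1}$) gives $I(x)=-(p-1)w_x^{p-1}\LL w(x)-w_x\LL w^{p-1}(x)$. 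The range of $\be$ imposed in the lemma guarantees $(p-1)\be\in(-1,\al)$, so \eqref{ullapl} applies to both $w=w_\be$ and $w^{p-1}=w_{(p-1)\be}$; combined with $|v(x)|^p w(x)^p=|u(x)|^p$, this produces the mass term of the lemma, whose coefficient carries the factor $(p-1)\gamma(\al,\be)+\gamma(\al,(p-1)\be)$.

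For the specialization $\be=(\al-1)/p$ one has $(p-1)\be=\al-1-\be$, and inspection of \eqref{eq:dg} shows that $\gamma(\al,\cdot)$ is symmetric about $\be=(\al-1)/2$, i.e.\ $\gamma(\al,\be)=\gamma(\al,\al-1-\be)$ (both numerators in \eqref{eq:dg} equal $-(1-t^\be)(1-t^{\al-1-\be})$), so the prefactor collapses to a single $\gamma(\al,(\al-1)/p)$. Expanding $(t^\be-1)(1-t^{\al-\be-1})=t^\be+t^{\al-\be-1}-t^{\al-1}-1$ and applying the analytically continued identity $\int_0^1 t^a(1-t)^{-1-\al}\,dt=\BB(a+1,-\al)$ with $\BB(\al,-\al)=0$ (pole of $\Gamma(0)$ in the denominator) and $\BB(1,-\al)=-1/\al$ identifies this single-$\gamma$ coefficient with $\kappa_{d,p,\al}$ from \eqref{eq:ns1}. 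The main technical point is that for $\be\ge 1$ or $(p-1)\be\ge 1$ the integrals defining $\LL w$ and $\LL w^{p-1}$ are only principal values, and the double Bregman integral may fail to be absolutely convergent along the diagonal; accordingly the pointwise identity and Fubini should first be applied on the truncated region $\{|y-x|>\eps\}$, and the limit $\eps\to 0^+$ is justified by the compact support of $u$ together with the cancellation between the $G$- and Bregman contributions near the diagonal that is forced by the absolute convergence of $\EEE_p[u]$.
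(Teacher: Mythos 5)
Your proposal is correct and follows essentially the same route as the paper: a pointwise decomposition of $p(u_x-u_y)(u_x^{\langle p-1\rangle}-u_y^{\langle p-1\rangle})$ into a Bregman remainder $F_p(v_x,v_y)w_x^{p-1}w_y$ (after symmetrization) plus a ``potential'' piece built from $w_x^{b}-w_y^{b}$ for $b\in\{1,p-1\}$, followed by truncating at $|x-y|>\eps$, integrating, and passing to the limit via \eqref{ullapl}. Your identity with the helper $G$ is the symmetrized form of the paper's \eqref{eq:dec}, and your observation that $\gamma(\al,\cdot)$ is symmetric about $(\al-1)/2$ is an efficient equivalent of the paper's appeal to \eqref{eq:gamm} at $\be=(\al-1)/p$; note also that the sign of the coefficient you actually derive, namely $-\bigl((p-1)\gamma(\al,\be)+\gamma(\al,(p-1)\be)\bigr)/p$, is the one consistent with the paper's own proof and with $\kappa_{d,p,\al}\geq 0$, even though the lemma's display as printed omits the minus sign. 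One small imprecision in your closing paragraph: the passage $\eps\to 0^+$ is not justified by any ``cancellation forced by absolute convergence of $\EEE_p[u]$'' (which may well be $+\infty$); what makes it work is that both the truncated left-hand side and the truncated Bregman integral have nonnegative integrands and hence increase monotonically to their limits, while the potential term converges uniformly on $\supp u$ by the paper's remark following the display, so the identity survives the limit in the extended-real sense.
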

\begin{proof}
Let $w= w_{\be}$, $u \in C_c(D)$, $x,y \in D$. We have
\begin{align}
&p u(x)^{\langle p-1 \rangle} (u(x)-u(y)) + |u(y)|^p \frac{w(x)^{p-1} - w(y)^{p-1}}{w(y)^{p-1}} + (p-1) |u(x)|^p \frac{w(y)-w(x)}{w(x)}\label{eq:dec} \\
&= |u(y)|^p \frac{w(x)^{p-1} - w(y)^{p-1}}{w(y)^{p-1}} + p |u(x)|^p \frac{w(y)-w(x)}{w(x)} - |u(x)|^p \frac{w(y)-w(x)}{w(x)}
\nonumber \\
&\quad 
- pu(x)^{\langle p-1 \rangle} (u(y)-u(x)) \nonumber\\
&=|u(y)|^p \frac{w(x)^{p-1} - w(y)^{p-1}}{w(y)^{p-1}} 
- |u(x)|^p \frac{w(y)-w(x)}{w(x)}
\nonumber\\
&\quad
- p \left(u(x)^{\langle p-1 \rangle}u(y) - |u(x)|^p - |u(x)|^p \frac{w(y)-w(x)}{w(x)} \right) \nonumber.
\end{align}

\noindent We integrate both sides
with respect to the measure 
$\mu_{\eps}(dx\,dy) :=\mathds{1}_{\{|x-y|>\eps\}} |x-y|^{-d-\al}\,dx\,dy$
and use
the symmetry of $\mu_{\eps}$. 
We obtain

\begin{align*}
\LHS &= \frac{p}{2} \int_{D}\int_{D} (u(x)-u(y)) (u(x)^{\langle p-1 \rangle} -u(y)^{\langle p-1 \rangle}) \, \mu_{\eps}(dx\,dy) \\
&\quad + \int_{D}\int_{D}|u(x)|^p \frac{w(y)^{p-1} - w(x)^{p-1}}{w(x)^{p-1}} \, \mu_{\eps}(dx\,dy) 
\\
&\quad+  (p-1)\int_{D}\int_{D}|u(x)|^p \frac{w(y)-w(x)}{w(x)} \, \mu_{\eps}(dx\,dy)
\end{align*}
and
\begin{align*}
 \RHS &=  \int_{D}\int_{D}\left(|u(y)|^p \frac{w(x)^{p-1}}{w(y)^{p-1}} - |u(y)|^p  -|u(x)|^p \frac{w(y)}{w(x)} + |u(x)|^p \right) \mu_{\eps}(dx\,dy)\\
&\quad- p \int_{D}\int_{D}\left(u(x)^{\langle p-1 \rangle}u(y) - |u(x)|^p - |u(x)|^p \frac{w(y)-w(x)}{w(x)} \right) \mu_{\eps}(dx\,dy) \\
&= \int_{D}\int_{D}\left(|u(y)|^p \frac{w(x)^{p-1}}{w(y)^{p-1}} -|u(x)|^p \frac{w(y)}{w(x)} -pu(x)^{\langle p-1 \rangle} u(y) + p|u(x)|^p \frac{w(y)}{w(x)} \right)\mu_{\eps}(dx\,dy) \\
&= \int_{D}\int_{D}\left(\frac{|u(y)|^p}{w(y)^p} -\frac{|u(x)|^p}{w(x)^p} -p \frac{u(x)^{\langle p-1 \rangle}}{w(x)^{p-1}} \left(\frac{u(y)}{w(y)} - \frac{u(x)}{w(x)} \right)\right) w(x)^{p-1} w(y) \, \mu_{\eps}(dx\,dy) \\
&= \int_{D}\int_{D} F_p\left(\frac{u(y)}{w(y)}, \frac{u(x)}{w(x)}\right) w(x)^{p-1} w(y) \, \mu_{\eps}(dx\,dy).
\end{align*}
We note that for $b \in \{1, p-1\}$,
$$
\int_{\{y \in D: |x-y|>\eps\}} \frac{w(x)^{b}-w(y)^{b}}{|x-y|^{d+\al}} dy \to 
-\LL w(x)^{b} \qquad \textnormal{as} \quad \eps \to 0,
$$
uniformly in $x \in \supp u$, thus letting $\eps \to 0$ yields

\begin{align*}
& \frac{p}{2} \int_D \int_D  (u(x)-u(y)) (u(x)^{\langle p-1 \rangle} -u(y)^{\langle p-1 \rangle}) |x-y|^{-d-\al} \, dx \, dy \\
&\quad =\int_D |u(x)|^p \lim_{\eps \to 0} \int_{\{y \in D:|x-y|> \eps\}}  \left(w(x)^{p-1} - w(y)^{p-1}\right) |x-y|^{-d-\al} \, dy  \, \frac{d x }{w(x)^{p-1}} \\
&\quad\quad +  (p-1) \int_D |u(x)|^p \lim_{\eps \to 0}\int_{\{y \in D: |x-y|>\eps\}} \left(w(x)-w(y)\right) |x-y|^{-d-\al} \, dy \, \frac{d x }{w(x)} \\
&\quad\quad + \int_{D}\int_{D} F_p\left(\frac{u(y)}{w(y)}, \frac{u(x)}{w(x)}\right) w(x)^{p-1} w(y) |x-y|^{-d-\al} \, dx \, dy \\
&\quad = \int_D |u(x)|^p  \left(\frac{-\LL w(x)^{p-1}}{w(x)^{p-1}} + (p-1) \frac{-\LL w(x)}{w(x)} \right) d x \\
&\quad\quad + \int_{D}\int_{D} F_p\left(\frac{u(y)}{w(y)}, \frac{u(x)}{w(x)}\right) w(x)^{p-1} w(y) |x-y|^{-d-\al} \,dx \,dy.
\end{align*}
Hence, the first assertion of the Lemma follows. Further,
\begin{align*}
&(p-1) \frac{\LL w (x)}{w(x)} + \frac{\LL w(x)^{p-1}}{w(x)^{p-1}} = \left( (p-1)\gamma(\al, \be) + \gamma(\al, (p-1)\be)\right) \frac{\pi^\frac{d-1}{2}  \Gamma\big(\frac{1+\al}{2}\big)}
  {\Gamma\big(\frac{\al+d}{2}\big)}\, x_d^{-\al}\\
	&\quad= \frac{\pi^\frac{d-1}{2}  \Gamma\big(\frac{1+\al}{2}\big)}
  {\Gamma\big(\frac{\al+d}{2}\big)} \int_0^1  \frac{(p-1)(t^{\be}-1)(1-t^{\al-\be-1})+ (t^{(p-1)\be}-1)(1-t^{\al-(p-1)\be-1})}{(1-t)^{1+\al}}\, d t \, x_d^{-\al}.
\end{align*} 
It is easy to check that, for $t \in(0,1)$, the minimum of the function
$$
\beta \mapsto (p-1)(t^{\be}-1)(1-t^{\al-\be-1}) + (t^{(p-1)\be}-1)(1-t^{\al-(p-1)\be-1})
$$
is $p(t^{\be}-1)(t^{\be'}-1)<0$, where $\be=\tfrac{\al-1}{p}$ and $\be'=\tfrac{(p-1)(\al-1)}{p}$. Further, 
since $\Gamma(x+1)=x\Gamma(x)$, by \cite[(2.2)]{MR2663757} we get
\begin{align}
\gamma(\al,\be)&=
\BB(\be+1,-\al)+\BB(\al-\be,-\al) + \frac{1}{\al} \label{eq:gamm},
\end{align}
for $\al\in(0,2)\setminus \{1\}$ and $\be\in(-1,\al)$.
Hence, the second assertion of the Lemma follows.
\end{proof}

\begin{proof}[Proof of Theorem~\ref{thm1}]
By Lemma \ref{ep}, \eqref{Hardyin} holds. 
To complete the proof, we will verify the
optimality of $\kappa_{d,p,\al}$. Our proof is a modification of \cite[Proof of Theorem 1.1]{MR2663757}.
In what follows let ${\be}=\frac{\al-1}{p}$.
If $\al\geq 1$, there are real functions $v_n$, $n=1,2, \ldots$, such that
\begin{itemize}
\item[(i)] $v_n=1$ on $[-n^2,n^2]^{d-1}\times \big[\frac{1}{n}, 1\big]$,
\item[(ii)] $\supp v_n \subset [-n^2-1,n^2+1]^{d-1}\times
  \big[\frac{1}{2n}, 2\big]$,
\item[(iii)] $0\leq v_n \leq 1$,
 $|\nabla v_n(x)|\leq cx_d^{-1}$ and
 $|\nabla^2 v_n(x)|\leq cx_d^{-2}$ for $x\in D$.
\end{itemize}
If $\al< 1$, then instead we  
take
$v_n$ 
satisfying
\begin{itemize}
\item[(i')] $v_n=1$ on $[-n^2,n^2]^{d-1}\times [1, n]$,
\item[(ii')] $\supp v_n \subset [-n^2-n,n^2+n]^{d-1}\times
  \big[\frac{1}{2}, 2n\big]$,
\item[(iii)] $0\leq v_n \leq 1$,
 $|\nabla v_n(x)|\leq cx_d^{-1}$ and
 $|\nabla^2 v_n(x)|\leq cx_d^{-2}$ for $x\in D$.
\end{itemize}
Now, for any $\al\in (0,2)$, we define
\begin{equation}
  \label{eq:duu}
u_n(x)=v_n(x)^{\frac{2}{p}}x_d^{\be}\,.
\end{equation}
By Lemma \ref{ep},
\begin{align*}
\EEE_{p}[u_n]=\kappa_{d,p, \al} \int_D \frac{|u_n(x)|^p}{x_d^{\al}} \,dx
+ \frac{1}{p} \int_D\int_D \frac{F_p\left(v_n(x)^{\frac{2}{p} },v_n(y)^{\frac{2}{p}}\right)} {|x-y|^{d+\al}}\,w(x)^{p-1}w(y)\,dx\,dy.
\end{align*}
We have, for $\al \geq 1$,
\begin{equation}\label{lewastr}
\int_D \frac{|u_n(x)|^p}{x_d^\al}\,dx \geq
\int\limits _{\{x:\,\|x'\|\leq n^2,\,\frac{1}{n}<x_d<1\}}
 \frac{x_d^{\al-1}}{x_d^\al}\,dx = (2n^2)^{d-1}\log n,
\end{equation}
and, for $\al<1$,
\begin{equation}
\int_D \frac{|u_n(x)|^p}{x_d^{\al}} dx \geq \int\limits _{\{x:\,\|x'\|\leq n^2,\,1<x_d<n\}}
 \frac{x_d^{\al-1}}{x_d^\al}\,dx = (2n^2)^{d-1}\log n.
\end{equation}
Now, it suffices to show that there exists a constant $c$ 
independent of $n$
such that
$$
\int\limits_D\int\limits_D \frac{F_p\left(v_n(x)^{\frac{2}{p} },v_n(y)^{\frac{2}{p}}\right)} {|x-y|^{d+\al}}\,w(x)^{p-1}w(y)\,dx\,dy \leq
 cn^{2(d-1)}.
$$
To this end, we adapt
\cite[ Proof of Lemma 2.3]{MR2663757}. 
Recall that by \cite[Lemma 2.3]{2020arXiv200601932B}, for $p>1$ and $a,b \in \R$, there exist $c_p, C_p>0$, such that
\begin{equation}\label{Fp-est}
c_p 
\left(b^{\langle p/2 \rangle}-a^{\langle p/2 \rangle}\right)^2
\leq F_p(a,b) \leq 
C_p
\left(b^{\langle p/2 \rangle}-a^{\langle p/2 \rangle}\right)^2
\end{equation}
and hence
\begin{align*}
 \int_D\int_D \frac{F_p\left(v_n(x)^{\frac{2}{p}},v_n(y)^{\frac{2}{p}}\right)} {|x-y|^{d+\al}}\,w(x)^{p-1}w(y)\,dx\,dy &\leq
 C_p
\int_D\int_D \frac{(v_n(x)- v_n(y))^2}{|x-y|^{d+\al}} w(x)^{p-1}w(y)\,dx\,dy\\
&=: C_p I.
\end{align*}
Let $B(x,s,t)=B(x,t)\setminus B(x,s)$. We can bound the latter integral by $cn^{2(d-1)}$, as in \cite{MR2663757}. Considering first $\al\geq 1$, we obtain
\begin{align*} 
I&\leq\int_D \int_{B(x,\frac{1}{4n})} 
+ \int_{\{x:\,x_d\geq \frac{1}{2}\}} \int_{B(x,\frac{1}{4})}
+ \int_D \int_{D\setminus B(x,\frac{1}{4})}
+ \int_{P_n} \int_{D\cap B(x,\frac{1}{4n}, \frac{1}{4})}\\
&\quad+ \int_{R_n} \int_{D \cap B(x,\frac{1}{4n}, \frac{1}{4})}
+\int_{L_n} \int_{D\cap B(x, \frac{1}{4n}, \frac{1}{4})}\\
&=I_1+I_2+I_3+I_4+I_5+I_6,
\end{align*}
where, recalling that $x=(x',x_d)$,
\begin{eqnarray*}
P_n &=& \{x\in\R^d : \,\|x'\|\geq n^2-1\,,\;
0<x_d<\tfrac{1}{2} \} \,, \\ 
R_n &=& \{x\in\R^d : \,\|x'\| < n^2-1\,,\;0<x_d<\tfrac{2}{n} \} \,, \\
L_n&=&\{x\in \R^d : \|x'\| < n^2-1\,,\;\tfrac{2}{n} \leq x_d <\tfrac{1}{2} \} \, ,
\end{eqnarray*}
for $d\geq 2$, and $P_n=\emptyset$, $R_n = \{x\in\R : \,0<x<\frac{2}{n} \}$, $L_n=(\frac{2}{n},\frac{1}{2})$ for $d=1$. For simplicity of notation, let $K_n = \supp v_n$.
Now, estimates for the integrals $I_k$ are analogous to those in
\cite{MR2663757}, although 
we have the non-symmetric factor $w(x)^{p-1}w(y)$
here. For example, if $x\in K_n$ and $y\in B(x,\frac{1}{4n})$, then $|v(x)-v(y)|\leq c|x-y|x_d^{-1}$, as follows from (ii) and (iii). Hence
\begin{eqnarray*}
I_1&=& \int_D \int_{B(x,\frac{1}{4n})}
\frac{(v_n(x)-v_n(y))^2}{|x-y|^{d+\al}}\,w(x)^{p-1}w(y)\,dy\,dx\\
 &\leq&
\int_{K_n} \int_{B(x,\frac{1}{4n})}
\frac{(v_n(x)-v_n(y))^2}{|x-y|^{d+\al}}\,w(x)^{p-1}w(y)\,dy\,dx\\
&\quad& + \int_{K_n} \int_{B(x,\frac{1}{4n})}
\frac{(v_n(x)-v_n(y))^2}{|x-y|^{d+\al}}\,w(x)w(y)^{p-1}\,dy\,dx\\
 &\leq&
c \int_{K_n} \int_{B(x,\frac{1}{4n})} 
  \frac{x_d^{(p-1)\be-2} y_d^{\be}}{|x-y|^{d+\al-2}} \,dy\,dx\\
  &\quad& + c\int_{K_n} \int_{B(x,\frac{1}{4n})} 
  \frac{x_d^{\be-2} y_d^{(p-1) \be}}{|x-y|^{d+\al-2}} \,dy\,dx \\
	&\leq&
c' \int_{K_n} \int_{B(x,\frac{1}{4n})} 
  \frac{x_d^{\al-3} }{|x-y|^{d+\al-2}} \,dy\,dx \\
&\leq& c'' n^{2(d-1)},
\end{eqnarray*}
where in the last line we used the inequality $y_d \leq \frac{3}{2} x_d$, which follows from (ii) and the triangle inequality.
We now turn to the case $\al<1$. We have
\begin{align*}
I&=\int\limits_D\int\limits_D \frac{(v_n(x)-v_n(y))^2} {|x-y|^{d+\al}}\,w(x)^{p-1}w(y)\,dx\,dy\\
&\leq \int_D \int_{B(x,\frac{1}{4})}
+ \int_{\{x:\,x_d\geq \frac{n}{2}\}} \int_{B(x,\frac{n}{4})}
+ \int_D \int_{D\setminus B(x,\frac{n}{4})}
+ \int_{P_n} \int_{D\cap B(x,\frac{1}{4}, \frac{n}{4})}\\
&\quad+ \int_{\{x:0<x_d<2\}} \int_{D \cap B(x,\frac{1}{4}, \frac{n}{4})}
+ \int_{L_n} \int_{D\cap B(x, \frac{1}{4}, \frac{n}{4})}\\
&=I_1+I_2+I_3+I_4+I_5+I_6,
\end{align*}
where
\begin{eqnarray*}
P_n &=& \{x\in\R^d : \,\|x'\|\geq n^2-n\,,\; 0<x_d<\tfrac{n}{2} \} \, ,\\
L_n&=&\{x\in \R^d : \|x'\| < n^2-n\,,\; 2 \leq x_d <\tfrac{n}{2} \} \, ,
\end{eqnarray*}
for $d\geq 2$, and $P_n=\emptyset$, $L_n=(2,\frac{n}{2})$ for $d=1$.

The integrals $I_k$ can be estimated similarly 
as in the case $\al \geq 1$.

\end{proof}

\section{Proof of Theorem \ref{convex}}

Frank and Seiringer \cite{MR2469027} proved an abstract form of the fractional Hardy inequality in a~more general setting. Loosely speaking, they showed that under some assumptions the inequality \begin{equation}\label{genhardyfrankser}
\int_{\Omega}\int_{\Omega}|u(x)-u(y)|^{p}\,k(x,y)\,dy\,dx\geq\int_{\Omega}|u(x)|^{p}\,V(x)\,dx
\end{equation}
holds for symmetric kernels $k(x,y)$ and related 
functions $V$ (see \cite[Proposition 2.2]{MR2469027}). 
In order to prove Theorem \ref{convex}, we first need to formulate an analogue of \eqref{genhardyfrankser} for Sobolev-Bregman forms.

Let $\Omega\subset\R^d$ be a nonempty, open set. Suppose that $\{k_{\varepsilon}(x,y)\}_{\varepsilon>0}$ is a family of measurable, symmetric kernels satisfying
$$
0\leq k_{\varepsilon}(x,y)\leq k(x,y),\quad \lim_{\varepsilon\rightarrow 0^+}k_\varepsilon (x,y)=k(x,y),
$$
 for almost all $x,y$ and some measurable function $k(x,y)$. Moreover, let $w$ be a positive, measurable function. Define
\begin{equation}\label{Veps}
 V_{\varepsilon}(x):=\int_{\Omega}\left(\frac{1}{p}\frac{w(x)^{p-1}-w(y)^{p-1}}{w(x)^{p-1}}+\frac{p-1}{p}\frac{w(x)-w(y)}{w(x)}\right)k_{\varepsilon}(x,y)\,dy 
\end{equation}
and suppose that there exists a function $V$ such that $V_{\varepsilon}\rightarrow V$ weakly in $L^{1}_{loc}(\Omega)$, that is, for any bounded $g$ with compact support in $\Omega$, $\int_{\Omega}V_{\varepsilon}(x)g(x)\,dx\rightarrow\int_{\Omega}V(x)g(x)\,dx$  as $\varepsilon\rightarrow 0$.

In what follows, we will use the notation
$$\EEE_{p}[u]=\EEE_{p}^{\Omega,k}[u]:=\frac{1}{2}\int_{\Omega}\int_{\Omega}(u(x)-u(y))\left(u(x)^{\langle p-1 \rangle}-u(y)^{\langle p-1 \rangle}\right)k(x,y)\,dy\,dx.$$
\begin{lemma}\label{genhardy}
For $u\in C_{c}(\Omega)$,
\begin{equation}\label{generalhardy}
 \EEE_{p}[u]\geq\int_{\Omega}|u(x)|^{p}\,V(x)\,dx.
\end{equation}
\end{lemma}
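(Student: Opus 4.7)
My plan is to adapt the proof of Lemma~\ref{ep} to this abstract setting. The argument there splits naturally into two independent parts: a pointwise algebraic manipulation, which is insensitive to the choice of $\Omega$ or to the specific kernel used, and an identification of certain principal-value integrals as multiples of $\LL w^{b}$ via~\eqref{ullapl}. Only the second part is specific to the half-space; in the present setting it is replaced directly by the definition of $V_{\varepsilon}$.

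First I would take the pointwise identity~\eqref{eq:dec}, valid at any $x,y\in\Omega$ with $w(x),w(y)>0$, and integrate both sides over $\Omega\times\Omega$ against the symmetric measure $k_{\varepsilon}(x,y)\,dx\,dy$. Using the symmetry of $k_{\varepsilon}$ to swap $x$ and $y$ in the term with the $|u(y)|^{p}$-prefactor, the left-hand side collapses to
\[
p\,\EEE_{p}^{\Omega,k_{\varepsilon}}[u]+\int_{\Omega}|u(x)|^{p}\!\left(\int_{\Omega}\!\Bigl(\tfrac{w(y)^{p-1}-w(x)^{p-1}}{w(x)^{p-1}}+(p-1)\tfrac{w(y)-w(x)}{w(x)}\Bigr)k_{\varepsilon}(x,y)\,dy\right)dx,
\]
and the inner bracket is exactly $-p\,V_{\varepsilon}(x)$ by~\eqref{Veps}. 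The right-hand side of the pointwise identity, after the telescoping performed in Lemma~\ref{ep}, becomes
\[
\int_{\Omega}\int_{\Omega}F_{p}\!\left(\tfrac{u(y)}{w(y)},\tfrac{u(x)}{w(x)}\right)w(x)^{p-1}w(y)\,k_{\varepsilon}(x,y)\,dy\,dx\geq 0,
\]
by nonnegativity of the Bregman divergence. Dividing by $p$ and discarding this nonnegative quantity yields
\[
\EEE_{p}^{\Omega,k_{\varepsilon}}[u]\geq\int_{\Omega}V_{\varepsilon}(x)\,|u(x)|^{p}\,dx.
\]

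It then remains to let $\varepsilon\to 0^{+}$. The right-hand side converges to $\int_{\Omega}V(x)|u(x)|^{p}\,dx$ immediately, since $|u|^{p}$ is bounded and compactly supported and $V_{\varepsilon}\to V$ weakly in $L^{1}_{\mathrm{loc}}(\Omega)$. On the left, the integrand $(u(x)-u(y))(u(x)^{\langle p-1\rangle}-u(y)^{\langle p-1\rangle})\geq 0$ for $p>1$, while $k_{\varepsilon}\leq k$ with $k_{\varepsilon}\to k$ pointwise; Fatou's lemma combined with this monotone upper bound gives $\EEE_{p}^{\Omega,k_{\varepsilon}}[u]\to\EEE_{p}[u]$, or else $\EEE_{p}[u]=+\infty$ and the claim is trivial. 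The main obstacle I expect is justifying the symmetrization/Fubini step for each fixed $\varepsilon$: individual summands of~\eqref{eq:dec} need not be absolutely integrable against $k_{\varepsilon}(x,y)\,dx\,dy$ even though their algebraic sum is. As in Lemma~\ref{ep}, the identity is arranged so that the potentially divergent $\pm|u|^{p}$ contributions cancel under the swap $x\leftrightarrow y$, and the surviving combinations are precisely those whose $y$-integrability is encoded in the well-definedness of $V_{\varepsilon}$ and $V$.
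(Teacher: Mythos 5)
Your proposal is correct and takes essentially the same route as the paper: integrate the pointwise identity \eqref{eq:dec} against $k_{\varepsilon}(x,y)\,dx\,dy$, symmetrize, recognize the bracket as $-p\,V_{\varepsilon}(x)$, drop the nonnegative Bregman term, and pass to the limit (the paper uses dominated convergence on the left and weak convergence on the right, exactly as you do, modulo your slightly different phrasing of the limit via Fatou plus the monotone upper bound). Your closing remark on the arrangement of \eqref{eq:dec} so that only $V_{\varepsilon}$-integrable combinations survive is a legitimate point that the paper's terse proof leaves implicit.
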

\begin{proof}
Proceeding as in the proof of Lemma \ref{ep}, we arrive at the equality
\begin{align*}
\frac{1}{2}\int_{\Omega}\int_{\Omega}&
(u(x)-u(y))\left(u(x)^{\langle p-1 \rangle}-u(y)^{\langle p-1 \rangle}\right)
k_{\varepsilon}(x,y)\,dy\,dx&\\
&=\int_{\Omega}|u(x)|^{p}\,V_{\varepsilon}(x)\,dx+\frac{1}{p}\int_{\Omega}\int_{\Omega}F_{p}\left(\frac{u(x)}{w(x)},\frac{u(y)}{w(y)}\right)w(x)^{p-1}w(y)k_{\varepsilon}(x,y)\,dy\,dx\\
&\geq \int_{\Omega}|u(x)|^{p}\,V_{\varepsilon}(x)\,dx,
\end{align*}
since $F_{p}(a,b)\geq 0$. Now we let $\varepsilon\rightarrow 0$ and use Lebesgue's Dominated Convergence Theorem on the left-hand side (provided that $\EEE_{p}[u]<\infty$) and weak convergence on the right-hand side to obtain the desired result.
\end{proof}
\begin{lemma}\label{lem3}
Let $1<\al<2$ and $J\subset\R$ be an open set. Then for $u\in C_{c}(J)$,
\begin{equation}\label{J}
\frac{1}{2}\int_{J}\int_{J}\frac{(u(x)-u(y))\left(u(x)^{\langle p-1 \rangle}-u(y)^{\langle p-1 \rangle}\right)}{|x-y|^{1+\al}}\,dy\,dx\geq\kappa_{1,p,\al}\int_{J}\frac{|u(x)|^{p}}{\dist(x,\partial J)^{\al}}\,dx.
\end{equation}
\end{lemma}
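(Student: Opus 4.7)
My plan is to reduce to a single open interval by additivity, handle $\R$ and half-lines directly, and treat the bounded case via Lemma \ref{genhardy} with a distance-type weight. The main obstacle is the bounded interval.

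Decompose $J=\bigsqcup_k I_k$ into its connected open intervals. Since $\dist(x,\partial J)=\dist(x,\partial I_k)$ for $x\in I_k$, the right-hand side of \eqref{J} is additive over $k$. The left-hand side decomposes into diagonal parts $\frac{1}{2}\int_{I_k}\int_{I_k}(\cdots)$ and off-diagonal parts $\frac{1}{2}\int_{I_k}\int_{I_m}(\cdots)$ with $k\ne m$; since $s\mapsto s^{\langle p-1\rangle}$ is strictly increasing on $\R$, the integrand $(u(x)-u(y))(u(x)^{\langle p-1\rangle}-u(y)^{\langle p-1\rangle})$ is pointwise nonnegative, so off-diagonal parts can be dropped. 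It therefore suffices to prove \eqref{J} on a single open interval. For $J=\R$ the right-hand side is zero; for $J$ a half-line, an affine change of variables reduces to $(0,\infty)$, giving the $d=1$ case of Theorem \ref{thm1}. In the remaining case $J=(a,b)$, I would apply Lemma \ref{genhardy} with $k_\eps(x,y)=\mathds{1}_{\{|x-y|>\eps\}}|x-y|^{-1-\al}$ and weight $w(y)=\dist(y,\{a,b\})^{(\al-1)/p}$, which on each half of $(a,b)$ matches the half-line weight from Lemma \ref{ep}. Since $(\al-1)/p>0$, $w$ is continuous, positive on $(a,b)$, and vanishes at the endpoints. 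The task reduces to proving the pointwise bound $V(x)\geq \kappa_{1,p,\al}\dist(x,\partial J)^{-\al}$.

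The main obstacle is this pointwise bound. By the reflection symmetry $x\mapsto a+b-x$, it suffices to treat $x\in(a,(a+b)/2]$, so $\dist(x,\partial J)=x-a$ and $w(x)=(x-a)^{(\al-1)/p}$. I would compare $V(x)$ with the half-line potential $\tilde V(x)$ on $(a,\infty)$ computed with weight $\tilde w(y)=(y-a)^{(\al-1)/p}$; by Lemma \ref{ep}, $\tilde V(x)=\kappa_{1,p,\al}(x-a)^{-\al}$. Write the integrand in \eqref{Veps} as $g(w(y)/w(x))$, where $g(t):=\tfrac{1}{p}(1-t^{p-1})+\tfrac{p-1}{p}(1-t)$ is strictly decreasing on $(0,\infty)$ and vanishes at $t=1$. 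On $(a,(a+b)/2)$ the weights $w$ and $\tilde w$ coincide, while
\[
V(x)-\tilde V(x)=\int_{(a+b)/2}^{b}\bigl[g(w(y)/w(x))-g(\tilde w(y)/w(x))\bigr]\,\frac{dy}{|x-y|^{1+\al}}-\int_{b}^{\infty}g(\tilde w(y)/w(x))\,\frac{dy}{|x-y|^{1+\al}}.
\]
On $((a+b)/2,b)$, $\tilde w(y)=(y-a)^{(\al-1)/p}>(b-y)^{(\al-1)/p}=w(y)$, and monotonicity of $g$ makes the first integrand positive. On $(b,\infty)$, $\tilde w(y)/\tilde w(x)>1$ gives $g(\tilde w(y)/w(x))<0$, so the subtracted tail is negative. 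Both adjustments are favourable, so $V(x)\geq\tilde V(x)=\kappa_{1,p,\al}(x-a)^{-\al}=\kappa_{1,p,\al}\dist(x,\partial J)^{-\al}$, and Lemma \ref{genhardy} yields \eqref{J}. A secondary technical point is the $L^1_{\mathrm{loc}}$ convergence $V_\eps\to V$, which proceeds as in the proof of Lemma \ref{ep} by symmetric cancellation of the odd $O(y-x)$ term in the integrand near $y=x$.
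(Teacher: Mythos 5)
Your proof is correct, but it takes a genuinely different route from the paper's.  The paper works on $(0,1)$ with the one-sided weight $w(x)=x^{(\al-1)/p}$, obtains $V(x)\ge\kappa_{1,p,\al}x^{-\al}$ by dropping the (nonpositive) tail integral over $(1,\infty)$, and only gets the distance weight $\min\{x,1-x\}^{-\al}$ afterwards by splitting $(0,1)$ at $1/2$, rescaling each half to $(0,1)$, applying the one-sided bound twice, and then dropping the nonnegative off-diagonal blocks of the double integral.  You instead use the symmetric weight $w(y)=\dist(y,\{a,b\})^{(\al-1)/p}$ from the start and prove the pointwise bound $V(x)\ge\kappa_{1,p,\al}\dist(x,\partial J)^{-\al}$ directly, by comparing $V$ with the half-line potential $\tilde V$: the two integrands coincide on the half of $(a,b)$ containing $x$, and on the other half as well as on the exterior tail the monotonicity of $g$ makes each correction favourable.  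Your version avoids the paper's scale-and-split step (and the accompanying bookkeeping of off-diagonal terms) entirely, which is a cleaner way to get the distance weight in one stroke; the paper's version pays with the splitting argument but enjoys a smooth weight throughout $(0,1)$.  The one place where you are a bit glib is the $L^1_{\mathrm{loc}}$ convergence $V_\eps\to V$: your weight has a Lipschitz kink at the midpoint $(a+b)/2$, so the ``odd $O(y-x)$'' cancellation you invoke does not hold at that point (the leading correction there is even and positive, and $V((a+b)/2)=+\infty$ for $\al>1$).  This is a measure-zero set and can be handled — e.g. by noting $V_\eps\to V$ a.e., bounding $V_\eps$ from below on compacts via the concavity of $w$ near the kink, and invoking Fatou as the paper itself does — but it deserves an explicit sentence rather than an appeal to the smooth-weight cancellation from Lemma~\ref{ep}.
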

\begin{proof}
Our proof relies on an appropriate modification of \cite[Proof of Theorem 2.5]{MR2659764}. 
We will first consider the case $J=(0,1)$.
Set 
$w=w_{(\al-1)/p}$ and, for $x\in(0,1)$, 
$$
V(x)=\text{P.V.}\int_{0}^{1}\left(\frac{1}{p}\frac{w(x)^{p-1}-w(y)^{p-1}}{w(x)^{p-1}}+\frac{p-1}{p}\frac{w(x)-w(y)}{w(x)}\right)|x-y|^{-1-\al}\,dy.
$$
By \eqref{ullapl}, 
we have 
\begin{align*}
V(x)&=\text{P.V.}\left(\int_{0}^{\infty}-\int_{1}^{\infty}\right)\left(\frac{1}{p}\frac{w(x)^{p-1}-w(y)^{p-1}}{w(x)^{p-1}}+\frac{p-1}{p}\frac{w(x)-w(y)}{w(x)}\right)|x-y|^{-1-\al}\,dy\\
&\geq \text{P.V.}\int_{0}^{\infty}\left(\frac{1}{p}\frac{w(x)^{p-1}-w(y)^{p-1}}{w(x)^{p-1}}+\frac{p-1}{p}\frac{w(x)-w(y)}{w(x)}\right)|x-y|^{-1-\al}\,dy\\
&=\kappa_{1,p,\al}x^{-\al},
\end{align*}
since the integrand is nonpositive for $y\in[1,\infty)$. In addition, the latter principal value integral is uniformly convergent on every compact set $K \subset (0,\infty)$. In consequence, 
$$V_{\varepsilon}(x)=\left(\int_{0}^{x-\varepsilon}+\int_{x+\varepsilon}^{1}\right)\left(\frac{1}{p}\frac{w(x)^{p-1}-w(y)^{p-1}}{w(x)^{p-1}}+\frac{p-1}{p}\frac{w(x)-w(y)}{w(x)}\right)|x-y|^{-1-\al}\,dy>0$$ 
for small $\varepsilon>0$ and all $x\in K$. Hence, by the proof of Lemma \ref{genhardy} combined with Fatou's lemma,
\begin{equation}\label{01}
\frac{1}{2}\int_{0}^{1}\int_{0}^{1}\frac{(v(x)-v(y))\left(v(x)^{\langle p-1 \rangle}-v(y)^{\langle p-1 \rangle}\right)}{|x-y|^{1+\al}}\,dy\,dx\geq\kappa_{1,p,\al}\int_{0}^{1}\frac{|v(x)|^{p}}{x^{\al}}\,dx,
\end{equation}
for any function $v$ such that $\supp v\subset (0,1]$. Moreover, for $u\in C_c((0,1))$, observe that using \eqref{01} gives
\begin{align*}
&\int_{0}^{1}\frac{|u(x)|^{p}}{\min\{x,1-x\}^{\al}}\,dx  \\
&\quad\quad=\int_{0}^{\frac{1}{2}}\frac{|u(x)|^{p}}{x^{\al}}\,dx+\int_{\frac{1}{2}}^{1}\frac{|u(x)|^{p}}{(1-x)^{\al}}\,dx\\
&\quad\quad=2^{\al-1}\left(\int_{0}^{1}\frac{\big|u\big(\frac{x}{2}\big)\big|^{p}}{x^{\al}}\,dx+\int_{0}^{1}\frac{\big|u\big(1-\frac{x}{2}\big)\big|^{p}}{x^{\al}}\,dx\right)\\
&\quad\quad\leq 2^{\al-1}\kappa_{1,p,\al}^{-1}
\Bigg(\int_{0}^{1}\int_{0}^{1}\frac{\Big(u\big(\frac{x}{2}\big)-u\big(\frac{y}{2}\big)\Big)\Big(u\big(\frac{x}{2}\big)^{\langle p-1\rangle}-u\big(\frac{y}{2}\big)^{\langle p-1\rangle}\Big)}{|x-y|^{1+\al}}\,dy\,dx\\
&\quad\quad\quad+\int_{0}^{1}\int_{0}^{1}\frac{\Big(u\big(1-\frac{x}{2}\big)-u\big(1-\frac{y}{2}\big)\Big)\Big(u\big(1-\frac{x}{2}\big)^{\langle p-1\rangle}-u\big(1-\frac{y}{2}\big)^{\langle p-1\rangle}\Big)}{|x-y|^{1+\al}}\,dy\,dx\Bigg)\\
&\quad\quad=\kappa_{1,p,\al}^{-1}\left(\int_{0}^{\frac{1}{2}}\int_{0}^{\frac{1}{2}}+\int_{\frac{1}{2}}^{1}\int_{\frac{1}{2}}^{1}\right)\frac{\left(u(x)-u(y)\right)\big(u(x)^{\langle p-1\rangle}-u(y)^{\langle p-1\rangle}\big)}{|x-y|^{1+\al}}\,dy\,dx\\
&\quad\quad\leq\kappa_{1,p,\al}^{-1}\int_{0}^{1}\int_{0}^{1}\frac{\left(u(x)-u(y)\right)\left(u(x)^{\langle p-1\rangle}-u(y)^{\langle p-1\rangle}\right)}{|x-y|^{1+\al}}\,dy\,dx.
\end{align*}

By translation and scaling, an analogous formula holds for any interval $(a,b)$ and since every open subset of $\R$ is a countable union of disjoint intervals, \eqref{J} is an easy consequence of the above computations.
\end{proof}

\begin{proof}[Proof of Theorem \ref{convex}]
We will use arguments similar to those presented by Loss and Sloane in \cite{MR2659764}. We denote by ${\mathcal L}_\omega$ the $(d-1)$ dimensional Lebesgue measure on the plane $x \cdot \omega =0$. Calculations analogous to \cite[Proof of Lemma 2.4]{MR2659764} and Lemma \ref{lem3} give

\begin{align*}
\frac{1}{2}&\int_{\Omega}\int_{\Omega} \frac{ (u(x)-u(y))(u(x)^{\langle p-1 \rangle} - u(y)^{\langle p-1 \rangle})}{|x-y|^{d+\al}} \, dx \, dy \\
&= \frac{1}{4}\int_{\Ss^{d-1}}\,d\omega \int_{\{x: x \cdot \omega = 0\}}\,d{\mathcal L}_\omega(x)  \int_{\{x+s\omega \in \Omega\}}\,ds  \int_{\{x+t\omega \in \Omega\}}
\\
&\quad
\times
\frac{ (u(x+s\omega)-u(x+t\omega))\left(u(x+s\omega)^{\langle p-1 \rangle} - u(x+t\omega)^{\langle p-1 \rangle}\right)}{{|s-t|}^{1+\al}}\,dt\\
&\geq \frac{1}{2} \kappa_{1,p,\al}\int_{\Ss^{d-1}}\int_{\{x: x \cdot \omega = 0\}}\int_{\{x+s\omega\in \Omega\}}\frac{|u(x+s\omega)|^{p}}{d_{\omega,\Omega}(x+s\omega)^{\al}}\,ds\,d{\mathcal L}_\omega(x)\,d\omega\\
&=  \frac{1}{2} \kappa_{1,p,\al}\int_{\Ss^{d-1}}\int_{\Omega}\frac{|u(x)|^{p}}{d_{\omega,\Omega}(x)^{\al}}\,dx\,d\omega\\
&=\kappa_{d,p,\al}\int_{\Omega}\frac{|u(x)|^{p}}{m_{\alpha}(x)^{\al}}\,dx,
\end{align*}
where the last equality follows from
$$
\int_{\Ss^{d-1}}|\omega_{d}|^{\al}\,d\omega =\frac{2 \pi^{\frac{d-1}{2}}\Gamma\big(\tfrac{1+\al}{2}\big)}{\Gamma\big(\tfrac{d+\al}{2}\big)}.
$$
This proves \eqref{hardyconvex1} and, in consequence, \eqref{hardyconvex2}. 
Since $\Omega$ is convex, there exists a hyperplane $\Pi$ tangent to $\Omega$ at a point $P \in \partial \Omega$. Now, calculations analogous to those in \cite[Proof of Theorem 5]{MR1458330} yield the optimality of the constant in \eqref{hardyconvex2}.
\end{proof}
\begin{remark}\label{remark}
Noteworthy, if $\al \leq 1$ and $\Omega$ is a bounded convex domain, then the best constant in the inequality \eqref{hardyconvex1} is zero. Indeed, first notice that every convex set is a Lipschitz domain. Dyda constructed in \cite{MR2085428} a sequence of functions $u_{n}\in C_{c}(\Omega)$ such that $0\leq u_n\leq 1$, $u_{n}\rightarrow 1$ pointwise and $\int_{\Omega}\int_{\Omega}\frac{|u_{n}(x)-u_{n}(y)|^{2}}{|x-y|^{d+\al}}\,dy\,dx\rightarrow 0$ for $\al<1$, as $n\rightarrow\infty$, $\int_{\Omega}\int_{\Omega}\frac{|u_{n}(x)-u_{n}(y)|^{2}}{|x-y|^{d+\al}}\,dy\,dx\leq C$ for $\al=1$. Taking $v_{n}=u_{n}^{2/p}$ and using \eqref{Fp-est}, we see that the inequality \eqref{hardyconvex1} cannot hold with a positive constant, as the right-hand side of \eqref{hardyconvex1} tends to a positive value, when $\al<1$ and to infinity, when $\al=1$.
\end{remark}

\bibliographystyle{acm}

\end{document}